\definecolor{mygray}{gray}{0.85}
\newcommand{\mbf}{\mathbf}
\newcommand{\op}{\operatorname}
\renewcommand{\Join}{\bigvee}
\renewcommand{\leq}{\leqslant}
\renewcommand{\geq}{\geqslant}
\renewcommand{\nleq}{\nleqslant}
\renewcommand{\ngeq}{\ngeqslant}
\theoremstyle{definition}
\newtheorem{thm}{Theorem}[section]
\newtheorem{cor}[thm]{Corollary}
\newtheorem{lm}[thm]{Lemma}
\newtheorem{notation}[thm]{Notation}
\newtheorem{prob}[thm]{Problem}
\newtheorem{fact}[thm]{Fact}
\newtheorem{definition}[thm]{Definition}
\newtheorem{remark}[thm]{Remark}
\date{\today}
\begin{document}

\author{J.\,B. Nation}

\address{Department of Mathematics, University of Hawaii, Honolulu, HI 96822.}
\email{jb@math.hawaii.edu}

\author{Gianluca Paolini}

\address{Department of Mathematics ``Giuseppe Peano'', University of Torino, Via Carlo Alberto 10, 10123, Italy.}
\email{gianluca.paolini@unito.it}

\begin{abstract}  In \cite{NaPaII} we proved that the universal theory of infinite free lattices is (algorithmically) decidable, leaving open the problem of decidability of the full theory of an (infinite) free lattice. We solve this problem by proving that, for every cardinal $\kappa \geq 3$, the first-order theory of the free lattice $\mbf F_\kappa$ is undecidable.
\end{abstract}

\thanks{Research of the second named author was  supported by project PRIN 2022 ``Models, sets and classifications", prot. 2022TECZJA, and by INdAM Project 2024 (Consolidator grant) ``Groups, Crystals and Classifications''.}

\title[Elementary properties of free lattices III]{Elementary properties of free lattices III: Undecidability of the full theory}

\maketitle  

\section{Introduction} 

    The question of algorithmic decidability of a given first-order theory is a classical theme of mathematical logic. Starting from the undecidability of $\mathrm{Th}((\mathbb{N}, +, \cdot))$, the field is riddled with (un)decidability results of first-order theories. Famous examples are the undecidability of $\mathrm{Th}((\mathbb{Z}, +, \cdot))$, of the theory of groups, and of $\mathrm{Th}((\mathbb{Q}, +, \cdot))$.   For an example on the positive side, Tarski proved in \cite{tarski} that the theory $\mathrm{Th}((\mathbb{R}, +, \cdot, <))$ is decidable. It is still an open problem (posed by Tarski) whether the theory $\mathrm{Th}((\mathbb{R}, +, \cdot, <, \mathrm{exp}))$ is decidable. In another direction, Quine proved in~\cite{quine} that the first-order theory of a non-cyclic free semigroup is undecidable.

\medskip 
    In \cite{NaPaI} we began the study of the model theory of free lattices, proving several fundamental results. This was continued in \cite{NaPaII}, where we proved that the universal (equiv.~existential) theory of infinite free lattices is decidable. One of the main open questions left in \cite{NaPaII} was the question of decidability of the (full) first-order theory of a given finitely generated free lattice $\mbf F_n$ ($n \geq 3$). In this paper we resolve this problem, proving, more strongly, the following three undecidability results:

    \begin{thm}\label{main_th} \begin{enumerate}[(1)]
    \item The first-order theory of free lattices is undecidable.
    \item The first-order theory of finitely generated free lattices is undecidable.
    \item For every cardinal $\kappa \geq 3$, the first-order theory of $\mbf F_\kappa$ is undecidable.
\end{enumerate}        
\end{thm}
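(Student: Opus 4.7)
The plan is to exploit Quine's theorem, highlighted by the authors, that the first-order theory of a non-cyclic free semigroup is undecidable, by producing a first-order interpretation of such a semigroup inside $\mbf F_\kappa$ for every $\kappa \geq 3$. I would focus first on $\mbf F_3$, from which part (2) is immediate, working with its three distinguished generators $a_1, a_2, a_3$ as parameters. Whitman's solution to the word problem and the canonical form theorem for free lattices are the expressive engine: they let one convert first-order statements in $\{\vee, \wedge\}$ into fine syntactic control over the ``shape'' of elements, which is exactly what is needed to set up an interpretation.

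The heart of the argument consists of producing two formulas in the language of lattices: a formula $\sigma(x)$ cutting out a definable subset $S \subseteq \mbf F_3$ of ``word codes'' and a ternary formula $\mu(x,y,z)$ expressing a concatenation operation on $S$, in such a way that $(S, \mu)$ is isomorphic to the free semigroup on two generators. A natural candidate for the coding is to choose two injective unary lattice polynomials built from the parameters, e.g.\ $L(x) = (x \wedge a_1) \vee a_2$ and $R(x) = (x \vee a_1) \wedge a_2$ (suitably tweaked to be jointly injective and first-order distinguishable), and to let $S$ be the image of $w_1\cdots w_k \mapsto w_1(\cdots w_k(a_3) \cdots)$ for words $w_i \in \{L, R\}$. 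Freeness of the interpreted semigroup then follows from the uniqueness of canonical forms, once we verify that distinct words produce distinct codes and that concatenation is first-order definable from $\vee, \wedge, a_1, a_2, a_3$. By Quine's theorem this yields undecidability of $\mathrm{Th}(\mbf F_3)$ and hence (2) together with the case $\kappa = 3$ of (3). For larger $\kappa$ in (3), I would use that $\mbf F_3$ sits inside $\mbf F_\kappa$ as the sublattice generated by any three of the free generators, and that ``being one of three such generators'' can be arranged to be first-order expressible, so the same interpretation, run with appropriately chosen parameters, transfers. Part (1) follows because the interpretation is uniform: the single sentence witnessing undecidability holds across all $\mbf F_\kappa$ with $\kappa \geq 3$ and hence in their common theory.

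The main obstacle, as is typical in such interpretation arguments, is producing $\sigma$ and $\mu$. Lattice operations are commutative and idempotent, which runs directly against the non-commutativity of free-semigroup concatenation; all the needed asymmetry must be extracted from the interplay between joins and meets together with Whitman's condition, and from the rigidity of canonical forms. Writing a first-order predicate that isolates \emph{exactly} the iterated images of $L$ and $R$ applied to $a_3$ — equivalently, a first-order description of being in a particular syntactic form within a canonical-form calculus — is the delicate core, and I expect the bulk of the technical work to lie there. Once $\sigma$ and $\mu$ are in place, the reduction to Quine's theorem is essentially formal.
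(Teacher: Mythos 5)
Your proposal takes a genuinely different route from the paper (which reduces from Nies's undecidability of the $\exists\forall$-theory of finite nice bipartite graphs rather than from Quine's theorem), but it has a gap at exactly the point you flag as ``the delicate core,'' and the gap is not merely technical. The set $S$ you want to define is the closure of $a_3$ under the two unary polynomials $L$ and $R$, i.e.\ the set $\{ w_1(\cdots w_k(a_3)\cdots) : k \geq 1,\ w_i \in \{L,R\} \}$. This is an inductively generated subset of $\mbf F_3$, and first-order logic has no general mechanism for defining the closure of an element under a family of operations; ``reachable in finitely many steps'' is the prototypical non-first-order notion. Nothing in Whitman's condition or the canonical form theorem supplies such a mechanism: canonical forms let you decide, for two concretely given terms, whether they are equal or comparable, but they do not hand you a single formula $\sigma(x)$ whose solution set is an infinite, recursively generated orbit. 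The same obstruction hits $\mu$ even harder: the code of the concatenation $uv$ is obtained by substituting the code of $v$ for $a_3$ at unbounded depth inside the code of $u$, so there is no visible first-order relation among the three elements $\mu$ is supposed to relate. You would need either to exhibit these formulas explicitly or to prove a definability theorem that yields them; neither is routine, and I see no reason to believe they exist.

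The paper sidesteps this entirely by choosing an undecidable source problem whose instances are \emph{finite} structures: each finite nice bipartite poset $Q$ is coded by a single element $w_Q$ of the free lattice, and $Q$ is recovered as the finite set $\{ u : w_Q \mathrel{E} u \}$ of elements occurring in doubly minimal join covers of $w_Q$ --- a uniformly first-order definable, and provably finite, set by the structure theory of canonical forms (Lemmas~\ref{lm:118} and~\ref{lm:311}). No inductive definitions or unbounded iterations are needed: the quantifiers of the bipartite-poset sentence are simply relativized to this finite definable set, and the Whitman-style embedding $\zeta$ of $\mbf F_m$ into $\mbf F_3$ handles part (3) for all $\kappa \geq 3$. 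If you want to salvage an interpretation-based argument via Quine, you would have to exhibit a first-order definable \emph{infinite} combinatorial structure inside $\mbf F_3$ carrying a definable concatenation-like operation, which is a substantially harder ask than the finite codings the paper uses.
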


    Concerning Theorem~\ref{main_th}(2), we observe that the question of decidability of the finitely generated free structures in a given variety of algebras has a long tradition, on this question see e.g. \cite{Ershov1966, Idziak1987, Lavrov1962, Malcev1962} and references therein.

    \smallskip A few words on our proof. We rely on the undecidability of  the $\forall\exists$-theory of nice finite bipartite graphs (cf. \ref{def_bi_graph_nice}) proved by Nies in \cite[Theorem~4.7]{Nies1996}. First we observe that also the $\forall\exists$-theory of nice finite bipartite posets is undecidable and then, given a  $\forall\exists$-sentence $\varphi$ in the language of posets, we construct a sentence $\varphi_*$ in the same language (which is also the language of lattices) such that $\varphi$ is true in all finite lattices if and only if $\varphi_*$ is true in $\mbf{F}_\kappa$ (where $\kappa \geq 3$ is fixed).

\medskip 

    With this paper, which is the third in a series of papers (cf.~\cite{NaPaI, NaPaII}), we solved all the major open questions on the model theory of free lattices that we identified in~\cite{NaPaI}, apart from the following fundamental question which remains open.

    \begin{prob} Are finitely generated free lattices first-order rigid, i.e., is it the case that if $\mbf L$ is finitely generated and elementary equivalent to $\mbf F_n$, then $\mbf L \cong \mbf F_n$?
    \end{prob}

\subsection{Notation}

We will consider lattices as structures in a language $L = \{\leq\}$, i.e., in the language of posets. Throughout we use boldface to denote $k$-tuples, 
so that for example $\mbf x = (x_1, \dots, x_k)$, etc. As in \cite{the_book}, we denote lattices with boldface letters.

\section{Preliminaries}

    Before moving to our proof of undecidaiblity of free lattices, we need a few definitions and result from \cite{Nies1996}.

    \begin{definition}\label{def_bi_graph}
    By a {\em bipartite graph} $C = A \dot{\cup} B$ (disjoint union) we mean a structure in the language $L = \{S_{\mathrm{up}}, S_{\mathrm{down}}, R\}$ such that $S_{\mathrm{up}}$ and $S_{\mathrm{down}}$ are unary predicates, $R$ is binary, and we have the following:
    \begin{enumerate}[(1)]
        \item $S_{\mathrm{up}}$ holds of $c \in C$ iff $c \in A$;
        \item $S_{\mathrm{down}}$ holds of $c \in C$ iff $c \in B$;
        \item $R$ is an irreflexive relation on $C$;
        \item if $C \models cRd$, then $c \in A$ and $d \in B$.
    \end{enumerate}
\end{definition}

    \begin{definition}\label{def_bi_graph_nice} We say that a bipartite graph $C = A \dot{\cup} B$ is {\em nice} when:
\begin{enumerate}[(1)]
        \item $|A| \geq 3$,
        \item $|B| \geq 3$,
        \item $\forall a \in A$, there are at least two elements of $B$ which are adjacent to $a$, and there is at least one element of $B$ which is not adjacent to~$a$;
        \item $\forall b \in B$, there are at least two elements of $A$ which are adjacent to $b$, and there is at least one element of $A$ which is not adjacent to~$b$.
    \end{enumerate}  
\end{definition}

    \begin{fact}[{\cite[Theorem~4.7]{Nies1996}}] The $\exists\forall$-theory of finite nice bipartite graphs is undecidable.
\end{fact}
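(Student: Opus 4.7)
The plan is to reduce a known undecidable $\exists\forall$-problem about finite structures to the corresponding problem for finite nice bipartite graphs. A natural starting point is the refined form of Trakhtenbrot's theorem, which gives that the $\exists\forall$-theory of finite graphs is already undecidable. I would aim for a uniform translation $\varphi \mapsto \varphi^*$ from $\exists\forall$-sentences in the language of graphs to $\exists\forall$-sentences in $\{S_{\mathrm{up}}, S_{\mathrm{down}}, R\}$ such that $\varphi$ is satisfiable in some finite graph if and only if $\varphi^*$ is satisfiable in some finite nice bipartite graph.

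The obvious coding is the vertex--edge incidence bipartite graph. Given $G = (V, E)$, let $A$ be a copy of $V$ and let $B$ contain one element $b_e$ for each edge $e$ of $G$, plus padding elements engineered to force the four niceness conditions of Definition~\ref{def_bi_graph_nice}. Declare $b_e\, R\, v$ iff $v$ is an endpoint of $e$. Then $x E y$ is recovered by the existential formula $\exists b\,(S_{\mathrm{down}}(b)\wedge bRx \wedge bRy)$, and an $\exists\forall$-sentence $\exists \mbf{x}\,\forall \mbf{y}\,\psi(\mbf x, \mbf y)$ translates almost verbatim, with the universal relativized to $S_{\mathrm{up}}$. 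The difficulty is that unfolding the atomic $E$-literals inside $\psi$ introduces inner existential quantifiers, nominally producing a $\exists\forall\exists$-sentence. The fix is to pull all needed incidence witnesses into the outer existential block: since $\psi$ is quantifier-free and mentions only finitely many variables, one can guess in advance a bounded family of candidate witnesses in $B$ covering every pair of variables that $\psi$ could query, so that the universal body only has to verify incidences against these pre-selected witnesses.

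The remaining delicate point, which I expect to be the main obstacle, is the padding: one must ensure that clauses~(3) and (4) of Definition~\ref{def_bi_graph_nice} hold without letting padding elements of $B$ accidentally witness non-edges of the encoded $G$. This is arranged by making padding elements adjacent only to distinguished throwaway vertices in $A$, separated from the coding vertices by a universally definable marker (so that the separation itself does not cost an inner existential quantifier). The bookkeeping needed to make this separation compatible with the $\exists\forall$ quantifier bound, while simultaneously guaranteeing niceness uniformly in the original graph $G$, is where \cite[Theorem~4.7]{Nies1996} presumably does its most careful combinatorial work.
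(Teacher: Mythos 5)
First, note that the paper offers no proof of this statement: it is imported verbatim from \cite[Theorem~4.7]{Nies1996} and used as a black box, so there is no internal argument to compare yours against. What follows assesses your sketch on its own terms.

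The central gap is the quantifier-prefix bookkeeping, and it is not a detail that can be deferred to the padding stage. In the incidence coding, adjacency of two coding vertices $u,v\in A$ becomes the formula $\exists b\,(S_{\mathrm{down}}(b)\wedge uRb\wedge vRb)$. For a sentence $\exists\mathbf{x}\,\forall\mathbf{y}\,\psi$, every \emph{positive} edge atom of $\psi$ involving a universally quantified variable $y_j$ therefore acquires an existential quantifier inside the scope of $\forall\mathbf{y}$, producing an $\exists\forall\exists$ prefix. Your proposed repair --- guessing all needed incidence witnesses in the outer existential block --- cannot work for those atoms: the witnesses depend on the value of $\mathbf{y}$, which ranges over all of $A$ (of unbounded size), and no finite tuple of elements of $B$ chosen in advance can supply, for every instantiation of $\mathbf{y}$, an edge-witness for every pair that $\psi$ queries positively. (Negative edge atoms translate to universal statements and are harmless; the obstruction is exactly the positive atoms under the universal block.) This is precisely what makes prefix-sensitive undecidability results delicate, and the sketch does not overcome it. A second, related problem is that you frame the reduction in terms of satisfiability: an $\exists^*\forall^*$ sentence has the small-model property, so its (finite) satisfiability is decidable, and a satisfiability-preserving translation of such sentences proves nothing. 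The undecidable problem is truth in \emph{all} finite members of the class --- equivalently, finite satisfiability of the $\forall^*\exists^*$ negation --- and the reduction must be set up to preserve that. Finally, the padding required for conditions (3) and (4) of Definition~\ref{def_bi_graph_nice} adds elements of $A$ over which the relativized universal quantifiers also range; keeping them from corrupting the translation ``by a universally definable marker'' must itself be achieved without atoms that reintroduce inner existentials, and this is left unresolved.
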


    \begin{definition}\label{def_bi_poset} By a {\em bipartite poset} we mean a poset $(C, \leq)$ such that there is a set $A$ of maximal elements, and a set $B$ of non-maximal elements, and such that the elements of $B$ are minimal.  
\end{definition}

    \begin{remark} Clearly bipartite graphs (in the sense of \ref{def_bi_graph}) and bipartite posets (in the sense of \ref{def_bi_poset}) are essentially the same thing, in fact to every bipartite graph $C$ corresponds a unique bipartite poset $Q_C$, by letting the elements from the sort $S_{\mathrm{up}}$ of $C$ be the maximal elements of the corresponding poset $Q_C$. Furthermore, $C$ and $Q_C$ are bi-definable structures (recall the choice of language in \ref{def_bi_graph}). Clearly, the correspondence goes also the other way around, i.e., to every bipartite poset $Q$ corresponds a unique bipartite graph $C_Q$ by letting the maximal elements of $Q$ be the elements of the sort $S_{\mathrm{up}}$ of $C_Q$. 
    \end{remark}

    \begin{definition}\label{def_bi_poset_nice} We say that a bipartite poset $Q$ is {\em nice} if the corresponding bipartite graph $C_Q$ is nice.
\end{definition}      
        
        We immediately deduce the following:

    \begin{cor}\label{cor_undec} The $\exists\forall$-theory of finite nice bipartite posets is undecidable.
\end{cor}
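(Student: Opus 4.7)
The plan is to reduce the $\exists\forall$-theory of finite nice bipartite graphs to the $\exists\forall$-theory of finite nice bipartite posets via the bijective correspondence $C \leftrightarrow Q_C$ noted in the Remark. Given an $\exists\forall$-sentence $\varphi = \exists \mbf{x} \forall \mbf{y} \, \psi(\mbf{x}, \mbf{y})$ in the bipartite graph language, the goal is to effectively produce an $\exists\forall$-sentence $\varphi^*$ in the poset language $\{\leq\}$ such that $C \models \varphi$ if and only if $Q_C \models \varphi^*$ for every finite nice bipartite graph $C$. Since every finite nice bipartite poset arises as $Q_C$ for some such $C$, the Fact will then yield undecidability of the target theory.

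The construction first puts $\psi$ into negation normal form and then replaces each literal by a poset-language formula equivalent to it over nice bipartite posets. The $R$-literals become quantifier-free, since $R(v,w)$ corresponds to $w < v$, i.e.\ to $w \leq v \wedge w \neq v$. The sort literals are replaced by \emph{universal} formulas, exploiting that in a nice bipartite poset niceness forbids isolated elements, so the maximal/minimal partition coincides with the $A/B$ partition; concretely, both $S_{\mathrm{up}}(v)$ and $\neg S_{\mathrm{down}}(v)$ become $\forall z\,(v \leq z \to v = z)$ (``$v$ is maximal''), while both $S_{\mathrm{down}}(v)$ and $\neg S_{\mathrm{up}}(v)$ become $\forall z\,(z \leq v \to z = v)$ (``$v$ is minimal''). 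After this substitution the matrix is a positive Boolean combination of quantifier-free formulas and universal formulas of the shape $\forall z\, \alpha$, so, after renaming bound variables, all of the universal quantifiers can be pulled to the front to produce $\psi^*(\mbf{x}, \mbf{y}) \equiv \forall \mbf{z}\, \psi^{**}(\mbf{x}, \mbf{y}, \mbf{z})$ with $\psi^{**}$ quantifier-free. Setting $\varphi^* := \exists \mbf{x} \forall \mbf{y} \forall \mbf{z}\, \psi^{**}$ then yields a sentence of the desired $\exists\forall$-shape, and the literal-by-literal equivalence of the replacement over $Q_C$ delivers $C \models \varphi \iff Q_C \models \varphi^*$.

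The only real subtlety I anticipate is the handling of negated sort atoms. The naive translation $\neg S_{\mathrm{up}}(v) \mapsto \exists z\,(v < z)$ is existential and, when $v$ is among the $\forall$-quantified variables $\mbf{y}$, would introduce an $\exists$ inside the $\forall \mbf{y}$ block and thereby destroy the $\exists\forall$-form. The workaround, which is the crux of the argument, is to first invoke the bipartite-structure identity $\neg S_{\mathrm{up}} \equiv S_{\mathrm{down}}$ and then apply the \emph{universal} characterization of $S_{\mathrm{down}}(v)$ as ``$v$ is minimal'', which is correct only once one knows, by niceness, that no element is simultaneously maximal and minimal.
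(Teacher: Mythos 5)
Your proposal is correct and follows the same route the paper intends: the paper deduces the corollary ``immediately'' from the bi-definability of nice bipartite graphs and nice bipartite posets, which is exactly the reduction you carry out. You additionally spell out the one genuine subtlety that the paper leaves implicit --- that the translation must preserve the $\exists\forall$ prefix, which works because niceness lets every sort literal, positive or negated, be rendered by a \emph{universal} maximality/minimality formula --- and your handling of that point is right.
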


\section{Canonical joinands}

In this section, we consider elements $t$ in a free lattice that are join irreducible and not below any generator.  It turns out that such elements suffice for our purposes.

A \emph{join cover} of an element $p \in \mbf L$ is a finite subset $A \subseteq \mbf L$ such that $p \leq \Join A$.
The join cover is \emph{nontrivial} if $p \nleq a$ for all $a \in A$.
An element $p$ in a lattice $\bf L$ is \emph{join prime} if it has no nontrivial join cover, i.e., $p \leq \Join A$ implies $p \leq a$ for some $a \in A$.  
For finite subsets $A$, $B \subseteq \mbf L$, we say that $A$ \emph{refines} $B$, written $A \ll B$, if for every $a \in A$ there exists $b \in B$ such that $a \leq b$.  
The join cover $p \leq \Join A$ is \emph{minimal} if whenever $p \leq \Join B$ and $B \ll A$, then $A \subseteq B$.
In a free lattice, every nontrivial join cover refines to a minimal join cover \cite[pg.~33]{the_book}.
The join cover $p \leq \Join A$ is \emph{doubly minimal} if it is minimal (in the sense of refinement above) and whenever $p \leq \Join B \leq \Join A$ nontrivially, then $\Join B = \Join A$.

The property that $u$ is a canonical joinand of $v$ is a first order property, in fact it can be expressed by the following first-order condition: 
$$\exists z (z \ne v \text{ and } u + z = v)  \text{ and } \forall a,b \ ((a+b=v) \to (u \leq a \text{ OR } u \leq b)).$$   
Recall now the relation $p \,E\, q$ if $p \ne q$ and $q$ is in a doubly minimal join cover of $p$ (cf. \cite[pg.~45]{the_book}). The relation $E$ will be crucial to our purposes. Notice that if $t$ is join irreducible (as in all the cases we will be interested in), then the predicate $t \,E\, u$ is likewise first-order, in fact it can be expressed as:  $t \,E\, u$ if $\exists v$ such that
\begin{enumerate}[(i)]
\item $t \leq u + v$;
\item $t \nleq u$ and $t \nleq v$;
\item if $r$, $s < u$ then $t \nleq r+s+v$;
\item if $t \leq y + z \leq u + v$ and $t \nleq y$ and $t \nleq z$, then $y+z=u+v$.
\end{enumerate}



We need two lemmas from \cite{the_book}: namely 1.18 and 3.11.

\begin{lm} \label{lm:118} Let $\mbf F$ be a free lattice. Then an element of the form
$$t = \prod_{i \in [1, k]} \sum_{j \in [1, m_j]} t_{ij} \in F$$
with $k > 1$ and, for every $j \in [1, k]$, $m_j > 1$, is in canonical form iff:
    \begin{enumerate}[(1)]
        \item each $t_i$ is a generator or a join;  
        \item each $t_i$ is in canonical form;
        \item $t_i \nleq t_j$ for $i \ne j$;
        \item all $t_{ij} \ngeq t$.
    \end{enumerate}
\end{lm}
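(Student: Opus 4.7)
The plan is to prove both directions by induction on the length of the term $t$, using Whitman's condition together with the fact that in a free lattice every nontrivial join cover refines to a minimal one \cite[p.~33]{the_book}. Throughout, I use the standard characterization of the canonical form as the unique representation of minimal length.

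For the forward direction, I argue by contrapositive: if any one of (1)--(4) fails, I produce a strictly shorter term for $t$, contradicting canonicity. If (1) fails because some meetand $t_i$ is itself a meet $\bigwedge_\ell s_\ell$, the nested meets flatten to a single shorter top-level meet. If (2) fails, the induction hypothesis supplies a strictly shorter canonical form for the offending subterm, which may be substituted. If (3) fails with $t_i \leq t_j$, the meetand $t_j$ is redundant and may simply be dropped. If (4) fails with some $t \leq t_{ij}$, then a direct calculation shows $t = t_{ij} \wedge \bigwedge_{i' \neq i} t_{i'}$ (the inequality $\leq$ from $t \leq t_{ij}$ and $t \leq t_{i'}$; the inequality $\geq$ from $t_{ij} \leq t_i$), which is strictly shorter since $t_{ij}$ has smaller length than $t_i = \bigvee_j t_{ij}$.

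For the backward direction, I assume (1)--(4) and argue that the given expression is already canonical. Let $s$ be the canonical form of $t$. Whitman's condition applied to $s \leq t_i = \bigvee_j t_{ij}$ gives two alternatives: either $s$ lies below some single joinand $t_{ij}$, which is forbidden by (4) (since $s = t$), or some meetand of $s$ lies below $t_i$, which is what we need. The symmetric argument in the opposite direction, combined with (3) to ensure both sets of meetands are antichains, matches the outer meet structures up to permutation. Within each matched meetand, the uniqueness result \cite[Lemma~3.11]{the_book} for minimal nontrivial join covers in free lattices, together with (4), identifies the joinand sets, and condition (2) plus the inductive hypothesis completes the identification of the inner subterms.

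The most delicate step is the matching in the backward direction: condition (4) only guarantees the \emph{nontriviality} of the join cover $t \leq \bigvee_j t_{ij}$, not full minimality. Upgrading nontriviality to minimality via \cite[Lemma~3.11]{the_book}, and then exploiting the uniqueness (up to the obvious equivalence) of minimal nontrivial join covers in a free lattice, is exactly what forces the joinands $\{t_{ij}\}_j$ to agree with those that appear in the canonical form $s$.
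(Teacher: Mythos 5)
The paper offers no proof of this lemma: it is imported verbatim from \cite{the_book} (Lemma 1.18 there) and used as a black box, so there is no in-paper argument to compare against and your proof has to stand on its own. In outline it does: both directions follow the standard textbook route (canonical form as the minimal-length representative, length reduction for the forward direction, Whitman's condition $(W)$ plus induction for the converse). But there is one genuine misstep. In the backward direction you invoke Lemma~\ref{lm:311} to ``identify the joinand sets'' and to ``upgrade nontriviality to minimality.'' That appeal is circular: Lemma~\ref{lm:311} describes the doubly minimal join covers of an element \emph{given} that it is written in canonical form, and in \cite{the_book} it sits downstream of Lemma 1.18, not upstream of it. Fortunately the appeal is also unnecessary. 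Once $(W)$ (for meetands that are joins, with condition (4) ruling out the alternative $s \leq t_{ij}$) gives you, for each $i$, a meetand $s_{\sigma(i)}$ of the canonical form $s$ with $s_{\sigma(i)} \leq t_i$, and symmetrically some $t_{\tau(i')} \leq s_{i'}$, condition (3) forces $\tau\sigma = \mathrm{id}$ and hence $t_i = s_{\sigma(i)}$ as lattice elements; since both are terms in canonical form of smaller length (using (2) and the recursive structure of $s$), the induction hypothesis makes them identical as terms, joinands included. No statement about join covers is needed, and deleting that step repairs the proof.

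Two smaller points. First, in the forward direction, the reduction for a failure of (1) (``nested meets flatten to a single shorter top-level meet'') is only a length reduction if your complexity measure actually penalizes nesting; with the common convention that the rank of a term counts occurrences of generators, flattening is length-neutral, and condition (1) must either be built into the notion of canonical form or the rank adjusted, rather than derived from minimality alone. Second, in the backward direction you should say a word about meetands $t_i$ that are generators: there $(W)$ in the form you use it does not apply, and you need instead that generators are meet prime in a free lattice to extract a meetand of $s$ below $t_i$.
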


\begin{lm} \label{lm:311} Let $\mbf F$ be a free lattice. If $t \in F$ with $t = \prod_{i \in [1, k]} \sum_{j \in [1, m]} t_{ij}$ canonically, then the doubly minimal join covers of $t$ are precisely $$\{ \{ t_{i1}, \dots, t_{im} \} : i \in [1, k]\}.$$
That is, the elements with $t \,E\, u$ are precisely the $t_{ij}$'s.
In particular, the set 
$$\{ u \in F : t \,E\, u \}$$
is finite.
\end{lm}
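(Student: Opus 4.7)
The plan is to verify the two set-equalities separately, working throughout in the free lattice $\mbf{F}$ and exploiting Whitman's condition (W) together with the structural constraints on canonical form recorded in Lemma~\ref{lm:118}.

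For the forward direction, I would observe that each $\{t_{i1}, \dots, t_{im}\}$ is a join cover of $t$ since $t \leq t_i = \sum_j t_{ij}$, and it is nontrivial by condition~(4) of Lemma~\ref{lm:118}. To check minimality in the sense defined, take any $B$ covering $t$ with $B \ll \{t_{ij}\}_j$; then $\sum B \leq t_i$, and $B$ itself is nontrivial over $t$ (since $t \leq b \leq t_{ij}$ would violate condition~(4)). Applying (W) to $t = \prod_l t_l \leq \sum B$ then produces some $t_l \leq \sum B \leq t_i$, and condition~(3) of Lemma~\ref{lm:118} forces $l = i$, yielding $\sum B = t_i$. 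Uniqueness of the canonical join representation of $t_i$ now forces $\{t_{ij}\}_j \subseteq B$: each canonical joinand $t_{ij}$ must coincide with an element of $B$, since otherwise $B$ would provide a strictly smaller join representation of $t_i$, violating canonicity. Double minimality is then the same (W) computation: a nontrivial $B$ with $\sum B \leq t_i$ yields $t_l \leq \sum B$ for some $l = i$, hence $\sum B = t_i$.

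For uniqueness, let $A$ be any doubly minimal join cover of $t$. Applying (W) to $t = \prod_l t_l \leq \sum A$, nontriviality forces some $t_l \leq \sum A$. Setting $B := \{t_{lj}\}_j$, we have $t \leq \sum B = t_l \leq \sum A$ nontrivially, so double minimality of $A$ forces $\sum A = t_l$. Since $B$ is itself a minimal join cover by the forward direction, combining the minimality of $A$, the minimality of $B$, and the incomparability of canonical joinands in condition~(3) applied to $t_l$ pins $A$ down to $\{t_{lj}\}_j$. The finiteness of $\{u \in F : t\, E\, u\}$ is then immediate, since this set coincides with $\bigcup_i \{t_{ij} : j \in [1, m]\}$, a finite union.

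The step I expect to be the main obstacle is the rigidity step in the forward direction, namely showing that any join representation $\sum B = t_i$ with $B$ refining $\{t_{ij}\}_j$ must itself contain each canonical joinand $t_{ij}$. This rests on the uniqueness of canonical form in a free lattice, a structural fact from \cite{the_book} which depends on iterated applications of (W) together with the description of canonical joinands as non-joins with pairwise incomparable positions; once this rigidity is in hand, the rest of the argument reduces to bookkeeping with Whitman and with conditions (3)--(4) of Lemma~\ref{lm:118}.
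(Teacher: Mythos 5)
The paper does not actually prove this statement: it is imported verbatim (as ``Lemma 3.11'') from Freese--Je\v{z}ek--Nation \cite{the_book}, so there is no in-paper argument to compare against. Judged on its own, your proof is essentially the standard one from that source and is sound in outline: Whitman's condition (W) applied to $t=\prod_l t_l\leq\Join B$ pins the cover to a single meetand $t_i$ via the incomparability of the meetands, condition (4) of Lemma~\ref{lm:118} supplies nontriviality, and double minimality falls out of the same (W) computation. The one ingredient you lean on without naming it precisely is the refinement theorem for canonical join representations in a free lattice (Theorem~1.19 of \cite{the_book}): if $\Join B = t_i$, then $\{t_{ij}\}_j \ll B$. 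That fact, combined with $B\ll\{t_{ij}\}_j$ and the pairwise incomparability of the canonical joinands of $t_i$, is what literally forces $\{t_{ij}\}_j\subseteq B$ in the minimality step; your phrase ``otherwise $B$ would provide a strictly smaller join representation'' gestures at this but is not itself the argument. The same theorem is also the missing hinge in your converse: from $\Join A=t_l$ you need $\{t_{lj}\}_j\ll A$ \emph{first}, so that minimality of $A$ yields $A\subseteq\{t_{lj}\}_j$, and then irredundancy of the canonical form gives equality; invoking ``minimality of $B$'' does no work there, since you have no a priori reason to believe $A\ll B$. With that one citation made explicit at both occurrences, the proof is complete and matches the textbook route.
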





\begin{lm} \label{lm:EFBP} Let $\mbf F$ be a free lattice. There is a first-order formula $\Psi(v)$ such that, for every $w \in F$ we have that $\mbf F \models \Psi(w)$ if and only if $w$ is a proper meet, $w \nleq x$ for every generator $x$ of $\mbf F$, and $U = \{ u : w \,E\, u \}$ is a nice bipartite poset (cf. \ref{def_bi_graph_nice}-\ref{def_bi_poset_nice}).
\end{lm}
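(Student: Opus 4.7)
The plan is to define $\Psi(v)$ as the conjunction of three first-order formulas $\Psi_1(v), \Psi_2(v), \Psi_3(v)$, one for each clause of the statement. For $\Psi_1$, expressing that $v$ is a proper meet, I will take
$$\exists a, b\,(a \ne v \wedge b \ne v \wedge v = a \cdot b),$$
noting that $v = a \cdot b$ together with $a \ne v$ forces $a > v$, so this says exactly that $v$ is the meet of two elements strictly above it.

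For $\Psi_2$, expressing ``$v \nleq x$ for every generator $x$ of $\mbf F$'', the key observation I will use is that in a free lattice, the generators are precisely the doubly irreducible elements. Indeed, by the existence and uniqueness of canonical form, every non-generator has at the top of its canonical expression either a proper join or a proper meet, and these two cases are mutually exclusive. Hence I define
$$\op{Gen}(x) \equiv \neg\exists a, b\,(a \ne x \wedge b \ne x \wedge x = a + b) \wedge \neg\exists a, b\,(a \ne x \wedge b \ne x \wedge x = a \cdot b)$$
and set $\Psi_2(v) \equiv \forall x\,(\op{Gen}(x) \to v \nleq x)$.

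For $\Psi_3$, since a proper meet is in particular join-irreducible (again by canonical-form uniqueness), the four-clause formula for $v \,E\, u$ displayed before Lemma~\ref{lm:118} is first-order in the free variables $v, u$; I will write it as $\chi(v, u)$. Then the set $U = \{u : v \,E\, u\}$ is uniformly first-order definable from $v$, and by Lemma~\ref{lm:311} it is finite in $\mbf F$. The conditions of Definitions~\ref{def_bi_graph_nice}--\ref{def_bi_poset_nice}, restated for $U$ with its induced order, translate directly into first-order clauses relativized to $\chi(v, \cdot)$: every $u$ with $\chi(v,u)$ is either maximal or minimal in $U$; there exist at least three distinct maximal elements and three distinct minimal ones; for each maximal $a \in U$ there are at least two minimal elements below $a$ and at least one minimal element not below $a$; and dually for each minimal $b \in U$. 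Taking $\Psi_3(v)$ to be the conjunction of these clauses and $\Psi \equiv \Psi_1 \wedge \Psi_2 \wedge \Psi_3$, the claimed equivalence is immediate from the definitions.

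The main subtlety I anticipate is localized in Step~2: giving a uniform first-order definition of generators, independent of $\kappa$, is what makes ``$v \nleq x$ for all generators $x$'' expressible at all, and this hinges on canonical-form uniqueness. Once that is in hand, Step~3 is a syntactic translation of the niceness conditions, enabled by the fact that $v \,E\, u$ is first-order under the hypothesis of $\Psi_1$.
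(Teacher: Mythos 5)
Your proposal is correct and follows essentially the same route as the paper: $\Psi$ is built as a conjunction of first-order clauses using the definable relation $w \,E\, u$ (legitimate since a proper meet is join irreducible) together with the finiteness of $U$ from Lemma~\ref{lm:311}, translating bipartiteness and the niceness conditions clause by clause. The only cosmetic differences are that the paper picks out the generators as the elements that are both join and meet prime, whereas you use double irreducibility (equivalent in a free lattice by uniqueness of canonical forms), and it expresses bipartiteness as the nonexistence of a $3$-element chain in $U$ rather than ``every element of $U$ is maximal or minimal,'' which amounts to the same thing.
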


    \begin{proof} We will use the following abbreviations in the definition of $\Psi(v)$:
\begin{enumerate}[(i)]
    \item $u \in U$ iff $w \,E\, u$;
    \item $u \in \op{max}(U)$ for $u \in U$ and $t > u$ implies $t \notin U$;
    \item $t \in \min(U)$ dually.
\end{enumerate}
Now, $\Psi(w)$ is the conjunction of the following conditions:
\begin{enumerate}[(a)]
    \item $w$ is a proper meet;
    \item if $x$ is both join and meet prime then $w \nleq x$;
    \item $\neg \  ( \exists u_1, u_2, u_3 \in U : \ u_1 > u_2 > u_3 )$;
    \item there are three distinct elements in $\max(U)$;
    \item there are three distinct elements in $\min(U)$;
    \item if $u \in \max(U)$ then there exist $s_1, s_2, s_3 \in \min(U)$ such that $u \geq s_1$ and $u \geq s_2$ and $s_1 \ne s_2$ and $u \ngeq s_3$;
    \item dually for $u \in \min(U)$.
\end{enumerate}
\end{proof}

\section{Undecidability}

    \begin{fact}\label{fact_emb_xi} If $Q$ is a finite poset with $Q = \{ q_1, \dots, q_m\}$, then there is a standard embedding $\xi : Q \rightarrow \mbf F_m(x_1, ..., x_m)$ via $\xi(q_i) = \prod \{ x_j  : q_j \geq q_i \}$.
\end{fact}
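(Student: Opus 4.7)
The plan is to check that $\xi$ is an order embedding, i.e., that $q_i \leq q_k$ holds in $Q$ if and only if $\xi(q_i) \leq \xi(q_k)$ holds in $\mbf{F}_m$. Note first that $\xi(q_i)$ is well-defined since $q_i$ itself always belongs to the indexing set $\{q_j : q_j \geq q_i\}$, so this set is nonempty.

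For the forward direction, I would reason purely set-theoretically on indices. If $q_i \leq q_k$, then any $q_j$ with $q_j \geq q_k$ also satisfies $q_j \geq q_i$, hence $\{j : q_j \geq q_k\} \subseteq \{j : q_j \geq q_i\}$. Taking meets of generators reverses inclusions, so $\xi(q_i) \leq \xi(q_k)$.

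For the reverse (the only nontrivial) direction, the key ingredient is the classical fact that in the free lattice $\mbf{F}_m(x_1, \dots, x_m)$, for any nonempty $J, K \subseteq [1,m]$,
\[
\bigwedge_{j \in J} x_j \;\leq\; \bigwedge_{k \in K} x_k \quad \Longleftrightarrow \quad K \subseteq J.
\]
This follows from Whitman's solution to the word problem (applied to canonical forms as in Lemma~\ref{lm:118}), or more elementarily by evaluating in the two-element lattice $\{0,1\}$ with the homomorphism sending $x_k \mapsto 0$ and $x_\ell \mapsto 1$ for $\ell \neq k$. Granting this, suppose $q_i \not\leq q_k$. Setting $J = \{j : q_j \geq q_i\}$ and $K = \{j : q_j \geq q_k\}$, we have $k \in K$ (since $q_k \geq q_k$) but $k \notin J$ (since $q_k \not\geq q_i$). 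Hence $K \not\subseteq J$, and so $\xi(q_i) \not\leq \xi(q_k)$.

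The only real work is the cited fact about meets of generators in a free lattice, which is standard; once it is invoked, both directions reduce to elementary manipulations with the sets of upper bounds in $Q$.
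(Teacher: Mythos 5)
Your proof is correct and complete. The paper states this as a ``Fact'' with no proof supplied (it is the standard order-embedding of a finite poset into a free lattice via meets of generators), and your argument --- the trivial forward direction plus the reduction of the converse to the fact that $\bigwedge_{j\in J}x_j\leq\bigwedge_{k\in K}x_k$ iff $K\subseteq J$, itself justified either by meet-primeness of generators or by evaluating in the two-element lattice --- is exactly the standard verification the authors are implicitly relying on.
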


An example of the embedding from \ref{fact_emb_xi} is given in Figure~\ref{fig:xi}.

\begin{figure}[H]
\begin{center}
\tikzstyle{every node}=[scale=1,draw,circle,fill=white,minimum size=5pt,inner sep=0pt,label distance=1mm] 
\begin{tikzpicture}[baseline=(current bounding box.center)]
    \node (1) at (0,2) [label=above:$x_1$] {};
    \node (2) at (2,2) [label=above:$x_2$] {};
    \node (3) at (4,2) [label=above:$x_3$] {};
    \node (4) at (6,2) [label=above:$x_4$] {};
    \node (5) at (0,0) [label={[anchor=base,yshift=-3mm]below:$x_1 x_2 x_5$}] {};
    \node (6) at (2,0) [label={[anchor=base,yshift=-3mm]below:$x_2 x_3 x_4 x_6$}] {};
    \node (7) at (4,0) [label={[anchor=base,yshift=-3mm]below:$x_3 x_4 x_7$}] {};
    \node (8) at (6,0) [label={[anchor=base,yshift=-3mm]below:$x_4 x_8$}] {};
    
    \draw (1)--(5)--(2)--(6)--(3)--(7)--(4)--(8); 
    \draw (6)--(4); 
\end{tikzpicture}
\end{center}
\caption{Example of embedding of a bipartite poset $Q$ into $F_8$.}
\label{fig:xi}
\end{figure}
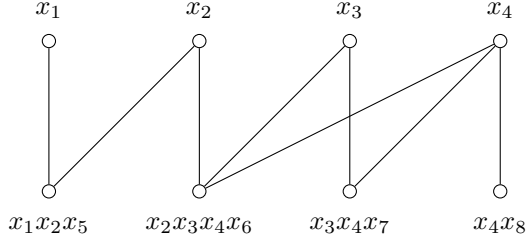

\begin{notation}\label{notation_wQ} Recalling \ref{fact_emb_xi} and $\xi$ from there. Let $Q$ be a finite nice bipartite poset. Let $m = |Q|$.  Form the word $w_Q$ in $\mbf F_m$ by defining:
\[  w_Q = \prod_{a \in \op{max}(Q)} \left ( \xi(a) \ + \sum_{b \in \op{min}(Q),\  b \nleq a} \xi(b) \right ).        \]
    \end{notation}

    For example, from Figure~\ref{fig:xi}, we have
\begin{align*}
    w_Q &= (x_1 + x_2 x_3 x_4 x_6 + x_3 x_4 x_7  + x_4x_8) \\
        &\ \   \cdot (x_2 +  x_3 x_4 x_7  + x_4x_8) \\
        &\ \ \cdot (x_3 +  x_1 x_2 x_5  + x_4x_8) \\
        &\ \ \cdot (x_4 +  x_1 x_2 x_5 ) 
\end{align*}

    \begin{lm}\label{lemma_wQ} Let $Q = A \dot{\cup} B$ be a finite nice bipartite poset. In the context of Notation~\ref{notation_wQ} and recalling the embedding $\xi$ from \ref{fact_emb_xi}. Let $\kappa \geq m = |Q|$ be a cardinal. Then the following hold:
    \begin{enumerate}[(1)]
        \item $w_Q$ as given in \ref{notation_wQ} is in canonical form (and so it is a proper meet);
        \item $\{ u \in F_\kappa : w_Q \,E\, u \} = \xi(Q)$;
        \item $\mbf F_\kappa \models \Psi(w_Q)$.
    \end{enumerate}
\end{lm}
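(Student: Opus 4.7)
The plan is to verify the three claims in order, with (2) and (3) following essentially from (1). The key structural observation is that for each $a \in A$ the $a$-factor of $w_Q$ is the join
\[ t_a \ = \ x_a + \sum_{b \in B,\, b \nleq a} \xi(b), \]
and by niceness each $\xi(b)$ is a proper meet of at least three distinct generators: $x_b$ together with the at-least-two generators $x_{a'}$ for $a' \in A$ adjacent to $b$. In particular no $\xi(b)$ equals a single generator, so $x_a \nleq \xi(b)$ for every $a$, and comparisons between joinands in the free lattice reduce to simple combinatorial conditions on $Q$.

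For (1), I would apply Lemma \ref{lm:118} directly. Conditions (1) and (2) reduce to checking that each $t_a$ is a canonical join, which in turn reduces to pairwise incomparability of its joinands: $x_a$ versus $\xi(b)$ (neither below the other, since $\xi(b)$ is a proper meet of generators not including $x_a$ when $b \nleq a$), and $\xi(b)$ versus $\xi(b')$ for distinct minimal $b, b'$ (since $x_{b'}$ is a meetand of $\xi(b')$ but not of $\xi(b)$). Condition (3), $t_a \nleq t_{a'}$ for $a \ne a'$, reduces to $x_a \nleq t_{a'}$: a generator lies below a join iff it lies below some joinand, but $x_a \ne x_{a'}$ and $x_a \nleq \xi(b)$ always. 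Condition (4), $t_{ij} \ngeq w_Q$, reduces via the meet form of each $\xi(b)$ to showing $w_Q \nleq x$ for every generator $x$. I verify this by Whitman-type analysis: $w_Q \leq x$ would force some factor $t_a \leq x$, hence $x_a \leq x$ (so $x = x_a$) and $\xi(b) \leq x_a$ for every $b \nleq a$, which would mean $a \geq b$ for every such $b$ — contradicting the existence of a non-neighbor guaranteed by niceness.

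For (2), Lemma \ref{lm:311} gives that $\{u \in F_\kappa : w_Q \,E\, u\}$ is exactly the set of inner joinands $t_{ij}$ of the canonical form. These are $\xi(a) = x_a$ for every $a \in A$, and $\xi(b)$ for every $b \in B$ that appears as a joinand of some $t_a$. Niceness guarantees every $b$ has at least one non-neighbor $a$, so $b \nleq a$ and hence $\xi(b)$ appears. Therefore $\{u : w_Q \,E\, u\} = \xi(A) \cup \xi(B) = \xi(Q)$.

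For (3), I would verify each conjunct of $\Psi(w_Q)$ in turn. Clauses (a) and (b) follow from (1) together with the above verification that $w_Q \nleq x$ for every generator $x$. The remaining clauses describe properties of the poset $U = \xi(Q)$ inside $\mbf F_\kappa$; since $\xi$ is an order-embedding (Fact \ref{fact_emb_xi}), the image $\xi(Q)$ inherits the bipartite structure of $Q$, with $\op{max}(U) = \xi(A)$ and $\op{min}(U) = \xi(B)$. Clause (c) (no $3$-chains) is then immediate, clauses (d)--(e) follow from $|A|, |B| \geq 3$, and clauses (f)--(g) translate the two-neighbor/one-non-neighbor conditions of niceness directly through $\xi$. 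The main technical obstacle is the careful verification of condition (4) of Lemma \ref{lm:118}, which requires propagating the Whitman-type analysis through the nested meet/join structure of $w_Q$.
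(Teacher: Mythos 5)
Your proposal is correct and follows essentially the same route as the paper: Lemma~\ref{lm:118} for item (1), Lemma~\ref{lm:311} for item (2), and a direct check of the clauses of $\Psi$ for item (3), with niceness entering exactly where it does in the paper's argument. The only local difference is in verifying condition (4) of Lemma~\ref{lm:118}: you derive $w_Q \nleq x$ for every generator $x$ directly from Whitman's condition and the existence of a non-neighbor, whereas the paper observes that every meetand of $w_Q$ lies above $\prod_{i \ne k} x_i$ for each $k$; both arguments are sound and of comparable length.
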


    \begin{proof} It is easy to see that it suffices to show items (1)-(3) for $\kappa = m$, i.e., for $\mbf F_\kappa = \mbf F_m$,
    since by \ref{lm:311}, $w_Q \,E\, u$ implies $\op{var}(u) \subseteq \op{var}(w_Q)$, where if $\mbf F(X)$ is a free lattice and $u \in \mbf F(X)$, with $\op{var}(u)$ we indicate the variables from the generating set $X$ occurring in the canonical form for~$u$.
    
    \smallskip \noindent Concerning item (1), apply Lemma~\ref{lm:118}.  The first three conditions for canonical forms are easy, using the fact that $|A|> 1$ and $\forall a \exists b \ a \ngeq b$.  The fourth condition holds because for every $x_k \in \{x_1, ..., x_m\}$, every meetand of $w_Q$ is above $\prod_{i \in [1, m], i \ne k} x_i$.  On the other hand, every $t_{ij}$ is $\xi(q)$ for some $q \in Q = A \dot{\cup} B$, whence $\xi(q) \leq x_k$ for at least one $k$. 
    But $\prod_{i \in [1, m], i \ne k} x_i \nleq x_k$, so $w_Q \nleq \xi(q)$.
    Thus $w_Q \nleq \xi(q)$ for all $q$.

    \smallskip \noindent Item (2) now follows immediately from (1) by Lemma~\ref{lm:311}.

    \smallskip \noindent Now check item (3) using item (2).  
    Note that if $a = q_i \in \max(Q) = A$, then $\xi(a)=x_i$;
    if $b = q_j \in \min(Q) = B$, then $\xi(b)$ is the only element of $\xi(Q)$ below $x_j$.  That eliminates the possibility of a 3-element chain in $\xi(Q)$.  The remaining conditions of $\Psi(w_Q)$ reflect the assumption that $Q$ is nice.
\end{proof}

As explined in the introduction, out strategy is to use the undecidability from Corollary~\ref{cor_undec}. So, w.l.o.g. we are looking at sentences of the form
\[  \varphi: \qquad \exists \mbf x \forall \mbf y \ (S_1 \text{ OR } \dots \text{ OR } S_p  ), \]
where each $S_j = S_j(\mbf x, \mbf y)$ is a conjunction of literals $s \leq t$ or $s \nleq t$ with $s$, $t \in \{ x_1, \dots, x_k, y_1, \dots, y_\ell \}$, for $\mbf x = (x_1, ..., x_k)$ and $\mbf y = (y_1, ..., y_\ell)$. Now, given a sentence $\varphi$ in the language of posets as above, consider the following sentence $\varphi_*$ in the same language:
\begin{multline*}
\forall w \left( \Psi(w) \to \exists \mbf x \ (\forall j : w \,E\, x_j) \right. \\
\left. \&\ \forall \mbf y \ (\forall k : w \,E\, y_k) \to (S_1 \text{ OR } \dots \text{ OR } S_p) \right).
\end{multline*}

\begin{lm}\label{lm:translate} 
\begin{enumerate}[(1)]
    \item If every finite nice bipartite poset (cf.~\ref{def_bi_poset_nice}) satisfies $\varphi$, then, for every free lattice $\mbf F$, we have that $\mbf F \models \varphi_*$.
    \item If $\varphi$ fails in a finite nice bipartite poset $Q$, then, for every cardinal $\kappa \geq |Q|$ we have that $\mbf F_{\kappa}$ fails $\varphi_*$ at $w_Q$.
\end{enumerate}
\end{lm}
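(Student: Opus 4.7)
The plan is to read $\varphi_*$ as the relativization of $\varphi$ to the ``$E$-slice'' $U(w) := \{u \in F : w \,E\, u\}$: the hypothesis $\Psi(w)$ is designed exactly to make $U(w)$ a finite nice bipartite poset, and both quantifier blocks inside the consequent of $\varphi_*$ are restricted by $w \,E\, \cdot$ to range over $U(w)$. Since the language is just $\{\leq\}$ and the $S_j$'s are conjunctions of literals between variables, their truth values transfer between $\mbf F$ and its induced sub-poset $U(w)$ for free. So the argument reduces, in both directions, to identifying $U(w)$ with the appropriate nice bipartite poset and invoking the (in)solvability of $\varphi$ there.

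For (1), I would take any free lattice $\mbf F$ and any $w \in F$ with $\mbf F \models \Psi(w)$. By Lemma~\ref{lm:EFBP}, $U(w)$ is a nice bipartite poset, and by Lemma~\ref{lm:311}, applied to the canonical form of $w$ (a proper meet by clause (a) of $\Psi$), $U(w)$ is finite. By the hypothesis of (1), $U(w) \models \varphi$, so there exist $x_1, \dots, x_k \in U(w)$ such that for every $y_1, \dots, y_\ell \in U(w)$ some $S_j(\mbf x, \mbf y)$ holds. These same tuples witness the consequent of $\varphi_*$ at $w$ inside $\mbf F$, and since $w$ was arbitrary, $\mbf F \models \varphi_*$.

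For (2), given a finite nice bipartite $Q$ in which $\varphi$ fails and a cardinal $\kappa \geq |Q|$, I would take $w = w_Q \in \mbf F_\kappa$. Lemma~\ref{lemma_wQ} gives $\mbf F_\kappa \models \Psi(w_Q)$ together with $U(w_Q) = \xi(Q)$, and Fact~\ref{fact_emb_xi} ensures $\xi$ is a poset embedding, so $\xi(Q)$ with its induced order is isomorphic to $Q$. Failure of $\varphi$ in $Q$ thus transfers to $\xi(Q)$: for every $\mbf x \in \xi(Q)^k$ there is $\mbf y \in \xi(Q)^\ell$ making every $S_j$ false. Translating via $w_Q \,E\, u \Leftrightarrow u \in \xi(Q)$, this is precisely the failure of the consequent of $\varphi_*$ at $w_Q$. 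The main point to watch is finiteness of $U(w)$ in (1), without which the hypothesis ``\emph{every finite} nice bipartite poset satisfies $\varphi$'' could not be invoked; this is exactly what Lemma~\ref{lm:311} supplies, and the real work of the argument has already been absorbed into Lemmas~\ref{lm:EFBP} and~\ref{lemma_wQ}.
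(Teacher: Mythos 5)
Your proposal is correct and follows essentially the same route as the paper: in (1) you use Lemmas~\ref{lm:EFBP} and~\ref{lm:311} to see that $\Psi(w)$ forces $U(w)$ to be a finite nice bipartite poset and then pull the witnesses for $\varphi$ back into $\mbf F$, and in (2) you use Lemma~\ref{lemma_wQ} to realize $Q$ as $U(w_Q)=\xi(Q)$ and transfer the failure of $\varphi$. Your explicit remark that the literals $S_j$ transfer between $\mbf F$ and the induced sub-poset because the language is just $\{\leq\}$ is a point the paper leaves implicit, but it is the same argument.
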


\begin{proof}
First assume that $\varphi$ holds in all finite nice bipartite posets.
Let $\mbf{F}$ be a free lattice and let $w \in F$. If $\Psi(w)$ fails, then $\varphi_*$ holds (since $\varphi_*$ is an implication with antecedent $\Psi(w)$). So suppose that $\Psi(w)$ holds. Then by Lemmas~\ref{lm:311} and~\ref{lm:EFBP} we have that $U = \{ u : w \,E\, u \}$ is a {\em finite} nice bipartite poset. Now, by assumption, $U$ satisfies $\varphi$ since $U$ is a finite nice bipartite poset. Hence, there exists $\mbf x \in U^k$ such that for all $\mbf y \in U^\ell$, the configuration $S_j(\mbf x, \mbf y)$ holds for some $j \in [1, p]$. That is the conclusion of $\varphi_*$, so $\mbf F \models \varphi_*$.



\smallskip \noindent On the other hand, suppose that $\varphi$ fails in a finite nice bipartite poset $Q$.
That is, for all $\mbf x \in Q^k$ there is a $\mbf y \in Q^\ell$ such that none of the configurations $S_j(\mbf x,\mbf y)$ occurs. Let $\kappa \geq |Q|$, then, by Lemma~\ref{lemma_wQ}, we can embed $Q$ into $\mbf F_{\kappa}$ via a map $\xi$ in such a way that we have
$$\xi(Q) = \{ u \in F_\kappa: w_Q \,E\, u \}$$
and $\mbf F_\kappa \models \Psi(w_Q)$, but the conclusion that $U = \{ u \in F_\kappa: w_Q \,E\, u \}$ contains one of the configurations $S_j$ fails. Thus, $\varphi_*$ fails at $w_Q$.
\end{proof}


\begin{thm} \begin{enumerate}[(1)]
    \item The first-order theory of free lattices is undecidable.
    \item The first-order theory of finitely generated free lattices is undecidable.
\end{enumerate}        
    \end{thm}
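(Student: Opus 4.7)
The plan is to exploit Lemma~\ref{lm:translate}: the map $\varphi \mapsto \varphi_*$ provides a many-one reduction from the $\exists\forall$-theory of finite nice bipartite posets (undecidable by Corollary~\ref{cor_undec}) to the first-order theories named in the statement. My first step is to observe that this translation is effectively computable: given an $\exists\forall$-sentence $\varphi$ of the specified disjunctive normal form, one mechanically assembles $\varphi_*$ from $\Psi(w)$ (Lemma~\ref{lm:EFBP}) together with the first-order expression of $w\,E\,u$ recorded in Section~3, with $\varphi$'s quantifiers relativised to $U = \{u : w\,E\,u\}$.

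For part (1), I would argue as follows. If $\varphi$ holds in every finite nice bipartite poset, then Lemma~\ref{lm:translate}(1) immediately gives $\mbf F \models \varphi_*$ for every free lattice $\mbf F$. Conversely, if $\varphi$ fails in some finite nice bipartite poset $Q$, then Lemma~\ref{lm:translate}(2), applied to $\kappa = |Q|$, exhibits a free lattice in which $\varphi_*$ fails (at $w_Q$). Hence $\varphi$ belongs to the $\exists\forall$-theory of finite nice bipartite posets if and only if $\varphi_*$ belongs to the first-order theory of free lattices, which by Corollary~\ref{cor_undec} yields undecidability of the latter.

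For part (2), the identical map $\varphi \mapsto \varphi_*$ suffices, because the counterexample produced in Lemma~\ref{lm:translate}(2) is already a \emph{finitely generated} free lattice $\mbf F_{|Q|}$. So $\varphi$ holds in every finite nice bipartite poset if and only if $\varphi_*$ holds in $\mbf F_n$ for every finite $n$ (equivalently, for every $n \geq 3$, noting that for $n < |Q|$ one still has the conclusion by the same embedding argument, or by restricting attention to large enough~$n$). Either reading gives the required reduction to the theory of finitely generated free lattices.

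The only subtlety I expect is bookkeeping on the syntactic shape of $\varphi$: Corollary~\ref{cor_undec} must be invoked for sentences of the particular form $\exists \mbf x\, \forall \mbf y\, (S_1 \vee \cdots \vee S_p)$ used to build $\varphi_*$. This is harmless, since every $\exists\forall$-sentence in the finite relational language of posets can be put into such disjunctive normal form without altering its truth value, and the transformation is computable. Beyond that, the proof is essentially immediate from Lemma~\ref{lm:translate}; the real work of the paper has already been done in constructing the translation and verifying it via Lemma~\ref{lemma_wQ}.
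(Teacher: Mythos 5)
Your proposal is correct and follows exactly the paper's route: the paper's own proof is just the one-line observation that the theorem is immediate from Corollary~\ref{cor_undec} and Lemma~\ref{lm:translate}, and you have simply spelled out the reduction (computability of $\varphi\mapsto\varphi_*$, the two directions from Lemma~\ref{lm:translate}, and the fact that the counterexample $\mbf F_{|Q|}$ is finitely generated). The extra remarks about putting $\varphi$ into the required disjunctive normal form are accurate bookkeeping that the paper leaves implicit.
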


    \begin{proof} This is immediate from \ref{cor_undec} and \ref{lm:translate}.
    \end{proof}


\section{Whitman revisited}

We recall the Whitman embedding of $\mbf F_k$ (for $3 \leq k \leq \omega$) into $\mbf F_3$ from~\cite{PMW1942}.
Let $X_3 = \{ x_1,x_2,x_3 \}$.
To get a sublattice isomorphic to $\mbf F_4$, use $X_4 = \{ u_1, u_2, u_3, u_4 \}$ where $u_i$ are the following lattice polynomials:
\begin{align*}
    u_1 &= (x_1+x_2x_3) (x_2+ x_1  x_3)  = f_1(x_1,x_2, x_3) \\
    u_2 &= (x_1+x_2x_3) (x_3+ x_1  x_2)  = f_2(x_1,x_2, x_3) \\
    u_3 &= x_1(x_2+x_3) + x_2(x_1 + x_3) = f_3(x_1,x_2, x_3) \\
    u_4 &= x_1(x_2+x_3) + x_3(x_1 + x_2) = f_4(x_1,x_2, x_3).    
\end{align*}
To get a sublattice isomorphic to $\mbf F_5$, let $X_5 = \{ v_1, \dots, v_5 \}$ where:
\begin{align*}
    v_1 &= u_1 \\
    v_2 &= f_1(u_2,u_3,u_4) \\
    v_3 &= f_2(u_2,u_3,u_4) \\
    v_4 &= f_3(u_2,u_3,u_4) \\
    v_5 &= f_4(u_2,u_3,u_4) 
\end{align*}
Thence $X_6= \{ v_1, v_2, f_1(v_1,v_2,v_3), \dots, f_4(v_1,v_2,v_3) \}$, etc.
The independence of these elements is checked as in Whitman~\cite{PMW1942}.
(A subset $X$ of a lattice is \emph{independent} if for every $x \in X$ and every finite subset $Y \subseteq X \setminus \{ x \}$, both $x \nleq \sum Y$ and $x \ngeq \prod Y$ hold.)
Furthermore:

\begin{lm} \label{lm:fkcf}
    Let $y_1$, $y_2$, $y_3$ be independent elements in a free lattice.
    Assume that each $y_iy_j$ and $y_i+y_j$ has been written in canonical form.    
    Then $f_j(y_1,y_2,y_3)$ is in canonical form for $1 \leq j \leq 4$.
\end{lm}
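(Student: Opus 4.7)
The plan is to apply Lemma~\ref{lm:118} to verify canonical form for the meets $f_1, f_2$, and to apply its order-theoretic dual to verify canonical form for the joins $f_3, f_4$. Since $f_2(y_1,y_2,y_3) = f_1(y_1,y_3,y_2)$ and $f_4(y_1,y_2,y_3) = f_3(y_1,y_3,y_2)$, and since $f_3$ is the lattice-theoretic dual of $f_1$, it is enough to verify the conditions of Lemma~\ref{lm:118} for $f_1 = t_1 t_2$ with $t_1 = y_1 + y_2 y_3$ and $t_2 = y_2 + y_1 y_3$; the other three cases then follow by relabeling and duality.

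Condition~(1) of Lemma~\ref{lm:118} (each meetand is a join) is immediate. For condition~(2) I would first verify that each $t_i$ is itself in canonical form by applying the dual of Lemma~\ref{lm:118}: the two joinands of $t_1$ are canonical by hypothesis, they are incomparable (since independence directly yields $y_1 \ngeq y_2 y_3$, and $y_1 \leq y_2 y_3 \leq y_2$ would contradict independence), and neither $y_2$ nor $y_3$ lies below $t_1$ by a single application of Whitman's condition combined with independence. Condition~(3) asks that $t_1 \nleq t_2$ and the reverse: by Whitman's condition $t_1 \leq t_2$ reduces to $y_1 \leq y_2 + y_1 y_3$, which in turn reduces (again by Whitman) to $y_1 \leq y_2$ or $y_1 \leq y_1 y_3 \leq y_3$, both forbidden by independence; the reverse direction is symmetric.

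The main obstacle is condition~(4): showing that none of the four leaves $y_1, y_2 y_3, y_2, y_1 y_3$ is above $f_1$. For the leaf $y_1$, the inequality $f_1 \leq y_1$ applied via Whitman's condition to the outer meet forces $t_1 \leq y_1$ or $t_2 \leq y_1$, provided the canonical form of $y_1$ does not itself start with a proper join (which is the case in the inductive construction of the Whitman embedding; otherwise one splits $y_1$ into its joinands and repeats). Each of these subcases is then eliminated exactly as in condition~(3). The three remaining leaves are handled in precisely the same way, unpacking $f_1 \leq y_2 y_3$, $f_1 \leq y_2$, and $f_1 \leq y_1 y_3$ by repeated applications of Whitman's condition until every remaining inequality is an atomic comparison ruled out by independence. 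The delicate aspect of the argument is bookkeeping these nested Whitman reductions while correctly accounting for the canonical forms of the $y_i$'s, which is where the hypothesis that each $y_i y_j$ and $y_i + y_j$ is already presented canonically is crucial.
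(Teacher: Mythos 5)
Your handling of conditions (1)--(3) of Lemma~\ref{lm:118} is essentially workable, though one step is shakier than you suggest: the reduction of $y_1 \leq y_2 + y_1y_3$ to ``$y_1 \leq y_2$ or $y_1 \leq y_1y_3$'' via Whitman is not valid when $y_1$ is a proper meet, since (W) then also permits a meetand of $y_1$ to lie below the join. (The conclusion is still easily saved: $y_2 + y_1y_3 \leq y_2 + y_3$, and independence gives $y_1 \nleq y_2+y_3$ directly, with no Whitman step.) The genuine gap is in condition (4), which is precisely the part the paper's proof is devoted to. Your plan is to unpack each inequality $t \leq t_{ij}$ by ``repeated applications of Whitman's condition until every remaining inequality is an atomic comparison,'' but this cannot be carried out for arbitrary independent elements: the $y_i$ are not generators and their canonical forms are unknown. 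If $y_1$ is a proper meet $\prod_k c_k$ --- as it is for the elements actually fed into this lemma in the Whitman construction, e.g.\ $u_1=(x_1+x_2x_3)(x_2+x_1x_3)$ --- then $t \leq y_1$ does \emph{not} reduce to $t_1 \leq y_1$ or $t_2 \leq y_1$; it reduces to $t \leq c_k$ for all $k$, and nothing whatever is known about the $c_k$, so your recursion has nowhere to bottom out. Your parenthetical caveat covers only the case where $y_1$ is a proper join, which is not the problematic one.

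The paper closes condition (4) with a one-line argument that sidesteps the internal structure of the $y_i$ entirely: $y_1y_2 + y_1y_3 + y_2y_3 \leq t$ (each $y_iy_j$ lies below both meetands of $t=f_1(y_1,y_2,y_3)$), while each leaf $t_{ij} \in \{y_1,\ y_2y_3,\ y_2,\ y_1y_3\}$ lies below some single $y_k$. Hence $t \leq t_{ij}$ would force $y_iy_j \leq y_k$ with $i,j \neq k$, contradicting the clause $y_k \ngeq \prod Y$ of independence. You need some such device that is uniform in the $y_i$ --- a lower bound for $t$ that independence forbids from sitting below any $y_k$ --- rather than a syntactic Whitman descent; as written, your argument for (4) does not go through.
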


    \begin{remark} Given $y_1, y_2 \in \mbf F$, for $\mbf F$ a free lattice, there is no \emph{a priori} guarantee that the canonical form of say $y_1y_2$ is the meet of the canonical forms of $y_1$ and $y_2$.  
There is an algorithm to put $y_1y_2$ into canonical form, and since the element is a proper meet, its canonical form will be also, as no element in a free lattice is both a meet and a join or a generator.  That is why Lemma~\ref{lm:fkcf} is stated the way it is.
\end{remark}

\begin{proof} We only prove this for $f_1$; a similar argument applies for $f_2$, and duals for $f_3$ and $f_4$. To see that $t := f_1(y_1,y_2,y_3) = (y_1+y_2y_3) (y_2+ y_1  y_3)$ is in canonical form, apply Lemma~\ref{lm:311}.  Properties (1)--(3) of that lemma are immediate in view of the assumptions and the above remark.
    For (4), note that $y_1y_2 + y_1y_3 + y_2y_3 \leq f_1(y_1,y_2,y_3) = t$, while each $t_{ij}$ is $\leq y_1$ or $y_2$ or $y_3$.  
    By independence, $t \nleq t_{ij}$.
\end{proof}

The sets $X_4$, $X_5$, and $X_6$ were constructed above.
Continuing and taking the limit yields an infinite independent set $X_\omega = \{ z_1, z_2, z_3, \dots \}$.
Moreover, by the construction, each $z_k$ is of the form $z_k =f_1(p,q,r) = (p+qr) (q+ pr)$ for some $p, q, r \in X_{k+2}$.   Since $X_{k+2}$ is independent, $z_k$ is a proper meet.
Thus, we obtain:


\begin{lm} \label{lm:whitman2}
    Let $X = \{ x_1, x_2, x_3, \dots \}$, with $|X| \leq \aleph_0$.  Then there is an embedding $\zeta: \mbf {FL}(X) \rightarrow \mbf F_3$ with $z_j = \zeta(x_j)$ join irreducible with canonical form given by the Whitman construction, i.e., $z_k =f_1(p,q,r)$ for independent elements $p$, $q$, $r$.
\end{lm}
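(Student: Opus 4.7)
The plan is to derive the three assertions --- existence of the embedding, the canonical form of each $z_k$, and join irreducibility of the $z_k$ --- from the independence of $X_\omega$ (already established along Whitman's original lines) together with Lemma~\ref{lm:fkcf}. For the embedding: since $X_\omega$ is independent in $\mbf F_3$, the sublattice it generates is free on $X_\omega$. Fixing any injection $X \hookrightarrow X_\omega$ (the full set if $|X| = \aleph_0$, or an initial segment if $|X|$ is finite), the universal property of $\mbf{FL}(X)$ supplies a unique lattice homomorphism $\zeta: \mbf{FL}(X) \to \mbf F_3$ extending this injection, and independence of the image forces $\zeta$ to be injective.

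For the canonical form, I would argue by induction on $k$ that $z_k$, written as $f_1(p,q,r) = (p+qr)(q+pr)$ for the triple $p,q,r \in X_{k+2}$ used in the Whitman construction, is in canonical form. The base case $z_1 = f_1(x_1, x_2, x_3)$ follows directly from Lemma~\ref{lm:fkcf}, since meets and joins of distinct generators of $\mbf F_3$ are trivially canonical. For the inductive step, $p,q,r$ are independent elements of $X_{k+2}$, each of which is either a generator of $\mbf F_3$ or, by the inductive hypothesis, has canonical form $f_j(p', q', r')$ for some earlier independent triple. A case analysis invoking Lemma~\ref{lm:118} shows that each pairwise meet $pq, pr, qr$ and each pairwise join $p+q, p+r, q+r$ is in canonical form, after which Lemma~\ref{lm:fkcf} yields the canonical form of $z_k$.

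Since $z_k$ has canonical form $(p+qr)(q+pr)$, it is a proper meet. In a free lattice, canonical forms are unique and partition non-generator elements into proper meets and proper joins, and these two classes are mutually exclusive. Hence $z_k$ cannot also be expressed as a proper join of two strictly smaller elements, so $z_k$ is join irreducible, completing the proof.

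The main obstacle is the inductive verification in the second step: in a free lattice the canonical form of $a \wedge b$ is in general \emph{not} the formal meet of the canonical forms of $a$ and $b$, so one must use both the specific structure of the Whitman polynomials and the independence of $X_{k+2}$ to check the canonical-form conditions of Lemma~\ref{lm:118} for each of the six pairwise meets and joins before Lemma~\ref{lm:fkcf} can be applied.
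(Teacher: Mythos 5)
Your overall route is the same as the paper's: independence of the Whitman set $X_\omega$ gives free generation of the sublattice it generates (hence the embedding $\zeta$), Lemma~\ref{lm:fkcf} gives the canonical form of each $z_k = f_1(p,q,r)$, and being a proper meet gives join irreducibility. The paper in fact offers no separate proof of Lemma~\ref{lm:whitman2}; it records exactly these three observations after the construction and states the lemma as an immediate consequence.

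There is, however, one genuine misstep: the ``main obstacle'' you identify, and the induction you build to overcome it, rest on a misreading of the hypothesis of Lemma~\ref{lm:fkcf}. That hypothesis is \emph{not} that the formal expressions $pq$, $pr$, $qr$, $p+q$, \dots\ are in canonical form as written; it is that each such element ``has been written in canonical form,'' i.e.\ replaced by its canonical form, which exists for every element of a free lattice and requires no verification --- this is precisely the point of the remark following Lemma~\ref{lm:fkcf}. So the only substantive hypothesis to check is independence of $p,q,r$, which is inherited from the independence of $X_{k+2}$, and no induction on the canonical forms of pairwise meets and joins is needed. Worse, the claim your case analysis is supposed to establish is false in general: already at the second stage of the construction one meets, e.g., $u_2u_4$ with $u_2 = (x_1+x_2x_3)(x_3+x_1x_2)$ a proper meet, and the formal meet of the canonical forms of $u_2$ and $u_4$ is an unflattened meet of a meet with another element, hence not a canonical form as written (Lemma~\ref{lm:118} presupposes a flattened meet of joins/generators). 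So the induction as you describe it would get stuck, not because the lemma fails, but because you are trying to prove something both unnecessary and untrue. Deleting that step and invoking Lemma~\ref{lm:fkcf} as actually stated repairs the argument and brings it in line with the paper.
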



    \begin{notation}\label{the_zeta_notation} In the previous section, we used a map $\xi$ to embed a finite nice bipartite poset $Q$ into $\mbf F_m$ where $m=|Q|$.  Then $\zeta \circ \xi$ embeds $Q$ into $\mbf F_3$, and thus into $\mbf F_3 \leq \mbf F_\kappa$ for every cardinal $\kappa \geq 3$.
\end{notation}

\begin{lm}\label{crucial_F3_lemma} In the context of Notation~\ref{the_zeta_notation}, assume $Q$ is a finite nice bipartite poset with $|Q| = m$. Then we have:
 \begin{enumerate}[(1)]
     \item $\zeta(w_Q)$ (for $w_Q(x_1, ..., x_m)$ as in Notation \ref{notation_wQ}) is in canonical form;
    \item $\{ u \in F_\kappa: \zeta(w_Q) \,E\, u \} = \{ \zeta \circ \xi(q) : q \in Q \}$;
     \item $\mbf F_\kappa \mbf \models \Psi(\zeta (w_Q))$ holds.
 \end{enumerate}   
\end{lm}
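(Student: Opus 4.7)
The proof is a direct adaptation of Lemma \ref{lemma_wQ} with $\zeta$ applied throughout, exploiting that $\zeta$ is a lattice embedding and that the Whitman generators $z_i = \zeta(x_i)$ are independent join irreducible elements, each with an explicit canonical form $z_i = f_1(p,q,r)$ (Lemmas \ref{lm:fkcf} and \ref{lm:whitman2}). As in the original argument, item (1) combined with Lemma \ref{lm:311} will yield item (2), and item (3) will then follow by the same order-theoretic verification as in Lemma \ref{lemma_wQ}.

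For item (1), the plan is to work with the formal expression
$$\zeta(w_Q) \;=\; \prod_{a \in \op{max}(Q)} \Bigl( \zeta\circ\xi(a) \;+\; \sum_{b \in \op{min}(Q),\ b \nleq a} \zeta\circ\xi(b) \Bigr)$$
and verify the four conditions of Lemma \ref{lm:118}. For maximal $a = q_i$ we have $\zeta\circ\xi(a) = z_i$, which is a proper meet in canonical form by Lemma \ref{lm:fkcf}; for minimal $b = q_j$, the element $\zeta\circ\xi(b)$ is the meet of the independent join irreducible elements $\{z_k : q_k \geq q_j\}$, and the dual of Lemma \ref{lm:118} combined with their independence and known canonical form places this meet in canonical form as well. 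The incomparability conditions (3) and (4) of Lemma \ref{lm:118} transfer from Lemma \ref{lemma_wQ} essentially for free, since $\zeta$ is an order embedding: for instance, $w_Q \nleq \xi(q)$ in $\mbf F_m$ implies $\zeta(w_Q) \nleq \zeta\circ\xi(q)$ in $\mbf F_3$.

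For item (2), once $\zeta(w_Q)$ is known to be in canonical form, Lemma \ref{lm:311} identifies its $E$-neighbors as precisely the inner joinands $\zeta\circ\xi(q)$ for $q \in Q$; each maximal element appears by construction, and each minimal $b$ appears as a joinand because the niceness of $Q$ supplies some maximal $a$ with $b \nleq a$. For item (3), the map $\zeta \circ \xi$ is an order embedding of $Q$ sending $\op{max}(Q)$ and $\op{min}(Q)$ to the maximal and minimal elements of the $E$-set, so the clause-by-clause verification of $\Psi(\zeta(w_Q))$ is identical to the verification of $\Psi(w_Q)$ in Lemma \ref{lemma_wQ}.

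The main obstacle is the careful verification of condition (2) of Lemma \ref{lm:118} in item (1): the inner joinands $\zeta\circ\xi(q)$ are no longer generators, as in $\mbf F_m$, but compound elements of $\mbf F_3$ built from the $z_i$. Establishing that the outer product remains in canonical form therefore requires invoking the full Whitman structure $z_i = f_1(p,q,r)$ for independent $p, q, r$ together with the independence of the entire family $\{z_1, \dots, z_m\}$ — precisely the content packaged into Lemmas \ref{lm:fkcf} and \ref{lm:whitman2}.
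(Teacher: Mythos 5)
Your overall skeleton matches the paper's proof: verify the conditions of Lemma~\ref{lm:118} for item (1), obtain item (2) from Lemma~\ref{lm:311}, and repeat the order-theoretic check of $\Psi$ for item (3); your remark that the incomparability conditions (3) and (4) of Lemma~\ref{lm:118} transfer because $\zeta$ is an order embedding is a legitimate (and slightly slicker) substitute for the paper's direct re-derivation. However, there is a genuine error at exactly the point you identify as the main obstacle. You assert that for minimal $b$ the element $\zeta\circ\xi(b)=\prod_{q_k\geq b} z_k$ is in canonical form as written, ``by the dual of Lemma~\ref{lm:118}.'' It is not, and neither Lemma~\ref{lm:118} nor its dual certifies it: a meet in canonical form must have meetands that are generators or joins, whereas each $z_k=f_1(p,q,r)$ is itself a \emph{proper meet}, so $\prod_k z_k$ as written has meetands that are meets and would have to be flattened before it could be canonical. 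This is precisely the pitfall flagged in the Remark following Lemma~\ref{lm:fkcf}: the meet of canonical forms need not be the canonical form of the meet. Independence of the $z_k$ does not rescue the claim.

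The paper resolves this differently, and the difference matters for item (2). One does \emph{not} show that $\prod_{q_k\geq b} z_k$ is canonical as written; one observes only that it is a proper meet, hence join irreducible (no element of a free lattice is both a meet and a join or a generator), so its canonical form is again a proper meet and it contributes a \emph{single} joinand $t_{ij}$ to the relevant meetand of $\zeta(w_Q)$. That is all Lemma~\ref{lm:118} needs in order to apply to $\zeta(w_Q)$ (with each $\zeta\circ\xi(b)$ silently replaced by its canonical form), and it is exactly what makes Lemma~\ref{lm:311} return the set $\{\zeta\circ\xi(q): q\in Q\}$ in item (2): if some $\zeta\circ\xi(b)$ were join reducible it would split into several $t_{ij}$'s and item (2) would fail. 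Replace your false canonical-form claim with this join-irreducibility argument and the rest of your proof goes through as in the paper.
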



    \begin{proof} The proof is the same as that of Lemma~\ref{lemma_wQ}, but a couple of comments about canonical forms are in order.  
To prove item (1), we can apply Lemma~\ref{lm:118} to $\zeta(w_Q)$ because, for every $j \in [1, m]$, $z_j = \zeta(x_j)$ is join irreducible. 
Again, as in the proof of Lemma~\ref{lm:fkcf},
we do not know that for $b \in \min(Q)$, $\zeta \circ \xi(b) = \prod_{c_k \geq b} z_k$ is in canonical form, but it is a proper meet and hence join irreducible, thus a $t_{ij}$ in the notation of \ref{lm:118}.
As in the proof of \ref{lemma_wQ}, for every $k \in [1,m]$, every meetand of $\zeta(w_Q)$ is above $\prod_{i \in [1, m], i \ne k} z_i$.  On the other hand, every $t_{ij}$ is $\zeta \circ \xi(q)$ for some $q \in Q = A \dot{\cup} B$.
Hence $\zeta \circ \xi(q) \leq x_k$ for some $k$.  Together these facts make $\zeta(w_Q) \leq \zeta \circ \xi(q)$, that is, $t \leq t_{ij}$ is impossible. Thus, item (4) of \ref{lm:118} holds for $\zeta(w_Q)$.

\smallskip \noindent   Item (2) of this lemma follows from (1). Finally, (3) follows from (2) exactly as in the proof of \ref{lemma_wQ},
    but replacing $x_j$ by $z_j$.
    \end{proof}

\begin{thm}
For every cardinal $\kappa \geq 3$, the first-order theory of $\mbf F_\kappa$ is undecidable.
\end{thm}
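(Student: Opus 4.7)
The plan is to run the same reduction as in the previous undecidability theorem, but now using the Whitman embedding machinery from Section 4 so that the witness landing inside $\mbf F_\kappa$ actually lives in $\mbf F_3$. As before, I would start from Corollary~\ref{cor_undec} and work with an $\exists\forall$-sentence
\[ \varphi: \quad \exists \mbf x \, \forall \mbf y \ (S_1 \text{ OR } \dots \text{ OR } S_p) \]
in the language of posets, and associate to it the sentence $\varphi_*$ defined just before Lemma~\ref{lm:translate}. Since the translation $\varphi \mapsto \varphi_*$ is effective, it suffices to prove that $\varphi$ holds in every finite nice bipartite poset if and only if $\mbf F_\kappa \models \varphi_*$. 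Decidability of $\mathrm{Th}(\mbf F_\kappa)$ would then yield decidability of the $\exists\forall$-theory of finite nice bipartite posets, contradicting Corollary~\ref{cor_undec}.

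For the forward direction, the argument of Lemma~\ref{lm:translate}(1) transfers verbatim: fix $w \in F_\kappa$, assume $\mbf F_\kappa \models \Psi(w)$ (otherwise the implication is vacuous), and observe that Lemmas~\ref{lm:311} and~\ref{lm:EFBP} force $U = \{u \in F_\kappa : w \,E\, u\}$ to be a finite nice bipartite poset. By hypothesis, $\varphi$ holds in $U$, which is exactly the conclusion of $\varphi_*$ at $w$. Nothing in this step depends on $\kappa$ being finite or on the existence of many generators, so it is insensitive to which $\mbf F_\kappa$ we are working in.

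The backward direction is where Section~4 is essential and where the only real work lies. Suppose $\varphi$ fails in some finite nice bipartite poset $Q$ with $|Q| = m$. For the finitely generated theorem one used $w_Q \in \mbf F_m$, but here $\mbf F_\kappa$ may have fewer than $m$ generators; this is the step I would flag as the main (and only genuine) obstacle. To bypass it, I would apply the Whitman embedding $\zeta: \mbf{FL}(X) \to \mbf F_3$ of Lemma~\ref{lm:whitman2} and take $w := \zeta(w_Q) \in \mbf F_3 \leq \mbf F_\kappa$ as the falsifying witness. By Lemma~\ref{crucial_F3_lemma}(1)--(3), $\mbf F_\kappa \models \Psi(\zeta(w_Q))$ and
\[ \{ u \in F_\kappa : \zeta(w_Q) \,E\, u \} = \{ \zeta \circ \xi(q) : q \in Q \}, \]
so that the set of $E$-predecessors of $w$ is order-isomorphic to $Q$ as a bipartite poset. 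Transporting the failing assignment through $\zeta \circ \xi$ shows that $\varphi_*$ fails at $\zeta(w_Q)$ in $\mbf F_\kappa$.

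Combining the two directions gives the required biconditional, and invoking Corollary~\ref{cor_undec} completes the reduction. The novelty compared to the finitely generated case is purely the substitution of Lemma~\ref{lemma_wQ} by Lemma~\ref{crucial_F3_lemma}; the logical scaffolding of the argument is unchanged. One small caveat I would double-check while writing is that, since $\mbf F_3$ embeds in $\mbf F_\kappa$ as a sublattice for every $\kappa \geq 3$, the relation $E$ computed in $\mbf F_3$ agrees with the one computed in $\mbf F_\kappa$ at the relevant elements; this is implicit in Lemma~\ref{crucial_F3_lemma}(2) but worth stating explicitly to justify using the same witness uniformly in $\kappa$.
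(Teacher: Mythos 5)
Your proposal is correct and follows essentially the same route as the paper: the forward direction is inherited verbatim from Lemma~\ref{lm:translate}(1), and the backward direction replaces the witness $w_Q \in \mbf F_m$ by $\zeta(w_Q) \in \mbf F_3 \leq \mbf F_\kappa$ via Lemma~\ref{crucial_F3_lemma}, exactly as the authors do. Your closing caveat about the $E$-relation being computed in $\mbf F_\kappa$ rather than in the sublattice $\mbf F_3$ is a sensible point, and it is indeed absorbed by the statement of Lemma~\ref{crucial_F3_lemma}(2), which is formulated directly for $F_\kappa$.
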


    \begin{proof} We argue as in the proof of \ref{lm:translate}, and in particular use the notation from there. Suppose that $\varphi$ fails in $Q$. Let $\eta = \zeta \circ \xi: Q \rightarrow \mbf F_\kappa$. 
That is, for all $\mbf x$ there is a $\mbf y$ such that none of the configurations $S_j(\mbf x,\mbf y)$ occurs. Now, by \ref{crucial_F3_lemma} we have:
$$\eta(Q) = \{ u \in F_\kappa : \zeta(w_Q) \,E\, u \},$$
and $\mbf F_\kappa \models \Psi(\zeta(w_Q))$, but the conclusion that $\{ u : \zeta(w_Q) \,E\, u \}$ contains one of the configurations $S_j$ fails. So, $\varphi_*$ fails at $\zeta(w_Q)$ in~$\mbf F_\kappa$.
\end{proof}

\end{document}